\newtheorem{lemma}{Lemma}
\newtheorem{theorem}{Theorem}
\newtheorem{cor}{Corollary}
\begin{document}

\title{The Robot Crawler Model on Complete k-Partite and Erd\H{o}s-R{\'e}nyi Random Graphs}
\author{A. Davidson%
\thanks{email: \texttt{angus.davidson@bristol.ac.uk}} and A. Ganesh%
\thanks{email: \texttt{a.ganesh@bristol.ac.uk}}}
\affil{School of Mathematics, University of
Bristol, University Walk, Bristol BS8 1TW}

\setcitestyle{numbers}

\maketitle

\begin{abstract}
Web crawlers are used by internet search engines to gather information about the web graph. In this paper we investigate a simple process which models such software by walking around the vertices of a graph. Once initial random vertex weights have been assigned, the robot crawler traverses the graph deterministically following a greedy algorithm, always visiting the neighbour of least weight and then updating this weight to be the highest overall. We consider the maximum, minimum and average number of steps taken by the crawler to visit every vertex of firstly, complete k-partite graphs and secondly, sparse Erd\H{o}s-R{\'e}nyi random graphs. Our work follows on from a paper of Bonato et. al. who introduced the model.
\end{abstract}

\section{Introduction}



Using an analogy introduced by Messinger and Nowakowsk \cite{messinger2008robot}, heuristically the robot crawler model can be viewed as a robot cleaning the nodes of a graph according to a greedy algorithm. Upon arriving at a given vertex the robot ``cleans'' the vertex, and then moves to its ``dirtiest'' neighbour to continue the process. Crawlers are of practical use in gathering information used by internet search engines, (\cite{brin1998anatomy}, \cite{henzinger2004algorithmic}, \cite{olston2010web}). This particular version of the model was introduced by Bonato et. al. \cite{bonato2015robot} and we direct the reader to their paper for further insight into the problem's motivation and previous work done. There they considered the robot crawler performed on trees, complete k-partite graphs (with equal sized vertex classes), Erd\H{o}s-R{\'e}nyi random graphs and the preferential attachment model. The purpose of this paper is to offer an answer to open problems 1 and 2 posed there which relate to generalising their work concerning complete k-partite graphs and Erd\H{o}s-R{\'e}nyi random graphs.

The model introduced by Messinger and Nowakowsk \cite{messinger2008robot} is analogous to the robot crawler model, but the robot cleans edges, (which are weighted), rather than vertices. Models similar to those studied by Messinger and Nowakowsk \cite{messinger2008robot} were investigated by Berenbrink, Cooper and Friedetzky \cite{berenbrink2015random} and Orenshtein and Shinkar \cite{orenshtein2014greedy} who considered a class of random walks on graphs which prefer unused edges, although in their models the walker chooses independently among adjacent edges when they have all previously been traversed. 

Given a finite connected undirected simple graph $G = G(V,E)$ we fix from outset an initial weighting; a bijective function $w_0:V \to \{ -n, -n+1 ..., -1 \}$ indicating the initial ranking of how dirty the vertices are. Here and henceforth ``dirtiest''/``cleanest'' refers to the vertex with the lowest/highest weight in a given set. At time 1 the robot visits the ``dirtiest'' node in $V$, i.e. $w_0^{-1}(-n)$. At time $t \in \mathbb{N}$ the robot updates the weight of the vertex visited to $t$. So if the robot visits vertex $v$ at time $t$ then $w_t(v) = t$ and $w_t(v') = w_{t-1}(v') \text{ }\forall v' \in V, v' \ne v, t \in \mathbb{N}$. If all vertices then have positive weight, i.e. $\min_{y \in V}(w_t(y)) > 0$ then the algorithm terminates and we output $\mathcal{RC}(G,w_0) = t$; the number of steps taken to clean all vertices. Otherwise at time $t+1$ the robot moves to vertex $argmin \{ w_{t}(u): (u,v) \in E \}$ i.e. the dirtiest neighbour of $v$ at time $t$, and the process continues. As proved in \cite{bonato2015robot}, this algorithm will always terminate after a finite number of steps.

Using $\Omega_n$ to denote the set of ($n!$) initial weightings we define $\text{rc}(G) = \min_{w_0 \in \Omega_n}(\mathcal{RC}(G,w_0))$ and $\text{RC}(G) = \max_{w_0 \in \Omega_n}(\mathcal{RC}(G,w_0))$ the minimum and maximum number of steps needed to clean all vertices of G.

Now supposing $\overline{w_0}$ is a uniformly chosen element of $\Omega_n$ we define the average number of steps needed to clean all vertices of G; $\overline{\text{rc}}(G) = \mathbb{E}(\mathcal{RC}(G,\overline{w_0}))$.

\section{Complete k-Partite Graphs}

\subsection{Results}

Given some constants $c_1 \ge c_2 ... , \ge c_k$, $\sum_{i=1}^k c_i = 1$, $k \ge 3$ consider the robot crawler model performed on the complete $k$-partite graph $G_{n}$ induced by vertex sets $V_1, V_2, ..., V_k$ where $|V_i| = c_in$ $\forall 1 \le i \le k$.

\begin{theorem}
\hfill
\begin{enumerate}[(i)]
\item For $c_1 \le \frac{1}{2}$,  $\text{rc}(G_n) = n$
\item For $c_1 > \frac{1}{2}$,  $\text{rc}(G_n) = 2nc_1 -1$
\end{enumerate}
\end{theorem}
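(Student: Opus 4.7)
For both parts the approach is the same: establish a counting lower bound, then construct an explicit initial weighting $w_0$ whose deterministic walk attains it.

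In part (i) the bound $\text{rc}(G_n) \ge n$ is immediate. For the upper bound, since $c_1 \le 1/2$ no part of the complete multipartite graph $G_n$ exceeds $n/2$, and a classical result then guarantees a Hamiltonian path $v_1, v_2, \ldots, v_n$. Setting $w_0(v_i) = -n + i - 1$, a short induction on $t$ shows the robot sits at $v_t$ after step $t$: $v_{t+1}$ is adjacent to $v_t$, has weight strictly smaller than every other unvisited vertex (by the choice of $w_0$) and strictly smaller than every already-visited vertex (whose weight is positive). Hence the robot cleans $G_n$ in exactly $n$ steps.

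In part (ii) the lower bound exploits that $V_1$ is an independent set of size $c_1 n > n/2$. Any cleaning walk $v_1, \ldots, v_T$ contains every vertex of $V_1$, so $|\{t : v_t \in V_1\}| \ge c_1 n$; and since $V_1$ carries no internal edges, these indices cannot be consecutive. Packing $c_1 n$ pairwise non-adjacent indices into $\{1, \ldots, T\}$ forces $T \ge 2 c_1 n - 1$.

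For the matching upper bound in (ii) I would construct the weighting explicitly. Writing $p = (1 - c_1) n$, label $V_1 = \{a_1, \ldots, a_{c_1 n}\}$ and $V \setminus V_1 = \{b_1, \ldots, b_p\}$, and assign $w_0$ to be $-n, -n+1, \ldots, -1$ down the list $a_1, b_1, a_2, b_2, \ldots, a_p, b_p, a_{p+1}, a_{p+2}, \ldots, a_{c_1 n}$. The claim, proved by induction on the step, is that the robot first traces $a_1, b_1, \ldots, a_p, b_p, a_{p+1}$ and then continues with $b_1, a_{p+2}, b_2, a_{p+3}, \ldots$, revisiting the $b_j$'s in round-robin order because at every such revisit the $b_j$ with smallest positive weight is the one visited least recently; this walk ends at $a_{c_1 n}$ after $2p + 1 + 2(c_1 n - p - 1) = 2 c_1 n - 1$ steps. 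Each inductive step needs two checks: adjacency of the planned next vertex, which is automatic since consecutive vertices on the planned walk lie in different parts (one in $V_1$, one outside), and that it is the dirtiest neighbour. The only subtlety, and what I anticipate being the most fiddly part of the proof because $k \ge 3$, is that transitions from a $b_j$ to the next $a_i$ must rule out competing unvisited $b_{j'}$ lying in parts different from $b_j$'s; this works out because the planned $a_i$ always carries the strictly smallest negative weight still in play.
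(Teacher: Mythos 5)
Your proof is correct and follows essentially the same route as the paper: a Hamiltonian path for (i), and for (ii) a lower bound from $V_1$ being an independent set of size exceeding $n/2$ together with an explicit weighting that forces the crawler to oscillate between $V_1$ and $V\setminus V_1$. The only (immaterial) difference is your interleaved weighting for the upper bound in (ii); the paper simply takes $w_0(v)<w_0(u)$ for all $v\in V_1$, $u\in V\setminus V_1$, which produces the same oscillating walk with less bookkeeping.
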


\begin{theorem}
\hfill
\begin{enumerate}[(i)]
\item For $c_2 \le \frac{1}{2}(1-c_1)$,  $\text{RC}(G_n) = n+c_{1}n-1$
\item For $c_2 > \frac{1}{2}(1-c_1)$,  $\text{RC}(G_n) = 2(n-c_{2}n)$
\end{enumerate}
\end{theorem}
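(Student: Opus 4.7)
My approach is structural: for an arbitrary initial weighting $w_0$ I identify a class $L$ and a time $t^*$ that together determine $\mathcal{RC}(G_n,w_0)$, then maximize the resulting expression via a path-cover argument on the complete multipartite graph $G_n[V\setminus V_L]$, and finally produce matching initial weightings. Let $L$ denote the class containing the \emph{very last} vertex visited and let $t^*$ be the first time every vertex of $V\setminus V_L$ has been visited. The key structural observation is that no revisit occurs during steps $1,\ldots,t^*$, and that from step $t^*+1$ onward the robot strictly alternates between visiting a new vertex of $V_L$ and revisiting a vertex of $V\setminus V_L$. Indeed, a revisit at step $t$ forces the robot to sit in some $V_j$ whose complement is fully visited by step $t-1$; if $j\ne L$ then $V_L\subset V\setminus V_j$ is itself fully visited by step $t-1$, contradicting the definition of $L$. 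Writing $m_L$ for the number of unvisited vertices of $V_L$ at time $t^*$, the absence of early revisits gives $t^*=n-m_L$ and the alternating tail contributes a further $2m_L-1$ steps, so
\[
\mathcal{RC}(G_n,w_0)\;=\;n+m_L-1.
\]

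\textbf{Upper bound.} Because the first $t^*$ steps contain no revisit, that initial segment is a simple path in $G_n$ visiting all of $V\setminus V_L$ together with exactly $c_Ln-m_L$ vertices of $V_L$. Deleting those $V_L$ vertices splits the path into at most $c_Ln-m_L+1$ subpaths in $G_n[V\setminus V_L]$, whose interior subpaths must be non-empty (otherwise two consecutive vertices of $V_L$ would appear in the path). Thus $c_Ln-m_L$ is at least one less than the minimum path cover of $G_n[V\setminus V_L]$, which for a complete multipartite graph $K_{n_1,\ldots,n_r}$ equals $\max(1,\,2n_{\max}-N)$ by a standard alternation count on the largest part. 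Specialising to $L=1$ (the optimal choice among all $L$, as a short monotonicity check using $c_1\ge c_2\ge\cdots$ confirms) yields $\max m_1=c_1n$ in case (i) and $\max m_1=(1-2c_2)n+1$ in case (ii), so $\mathcal{RC}(G_n)\le n+c_1n-1$ and $\mathcal{RC}(G_n)\le 2n(1-c_2)$ respectively.

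\textbf{Lower bound and main obstacle.} To realise these bounds I would exhibit explicit initial weightings. In case (i), let $u_1,\ldots,u_{(1-c_1)n}$ be a Hamiltonian path of $G_n[V\setminus V_1]$ (such a path exists precisely because $c_2\le(1-c_1)/2$), list $V_1$ arbitrarily as $v_1,\ldots,v_{c_1n}$, and set $w_0(u_i)=-n+i-1$ together with $w_0(v_j)=-c_1n+j-1$. An induction on $t$ then confirms the crawl traces out exactly $n+c_1n-1$ steps. In case (ii) the analogous construction replaces the Hamiltonian path by the optimal path cover of $V\setminus V_1$, inserting vertices of $V_1$ as ``connectors'' between its component paths. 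The hard part is precisely this last construction: one must simultaneously verify that the weighting forces the robot to thread through $V_1$ at exactly the $(2c_2+c_1-1)n-1$ prescribed moments during the pre-$t^*$ phase, \emph{and} that during the subsequent alternating phase the revisit at each step lands on a vertex of $V\setminus V_1$ rather than jumping to an unvisited $V_1$ vertex out of turn. Both checks amount to tracking ``dirtiest neighbour'' at every step, and this is where most of the bookkeeping in the proof lives.
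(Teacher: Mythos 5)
Your structural identity $\mathcal{RC}(G_n,w_0)=n+m_L-1$ is exactly the paper's key observation $\mathcal{RC}(G_n,w_0)=n+S_{w_0}-1$ (your $m_L$ is the ``surplus'' of the class containing the last-cleaned vertex), and your derivation of it is sound. Where you genuinely diverge is the upper bound: the paper bounds the surplus directly ($S_{w_0}\le\max_i|V_i|=c_1n$ for part (i); for part (ii) it notes that cleaning $V_2$ costs at least $2nc_2-1$ steps, after which the at most $n-2nc_2+1$ remaining vertices cost at most $2(n-2nc_2+1)-1$ more, giving $2n(1-c_2)$), whereas you observe that the first $t^*$ steps trace a simple path and convert everything into a minimum-path-cover computation on $G_n[V\setminus V_L]$. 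Your route is more uniform across the two cases and gives the sharp constant $m_1\le(1-2c_2)n+1$ directly; your monotonicity check over $L$ does go through (for $j\ge 2$ one gets $m_j\le c_jn+1-\max\bigl(1,(2c_1+c_j-1)n\bigr)$, which is dominated by the $L=1$ bound in both regimes).

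The one real shortfall is the part (ii) lower bound. You describe the ``path cover plus connectors'' weighting but explicitly leave its verification as bookkeeping, so as written the proof of $\text{RC}(G_n)\ge 2n(1-c_2)$ is incomplete. Your construction would work, but the paper's is much lighter and avoids the threading argument entirely: take $w_0$ making $V_2$ the $c_2n$ dirtiest vertices and $V_1$ the $c_1n$ cleanest. The crawler then automatically alternates between $V_2$ and $V_3\cup\dots\cup V_k$ until the latter is exhausted, then alternates between $V_2$ and $V_1$ until $V_2$ is exhausted, leaving a surplus of $(1-2c_2)n+1$ in $V_1$ and hence $\mathcal{RC}=2n(1-c_2)$. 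You should either carry out your connector verification in full or substitute this weighting.
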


\begin{theorem} 
\hfill
\begin{enumerate}[(i)]
\item For $c_1 < \frac{1}{2}$,  $\overline{\text{rc}}(G_n) = n+O(1)$
\item For $c_1 = \frac{1}{2}$,  $\overline{\text{rc}}(G_n) = n + O(n^{\frac{1}{2}})$
\item For $c_1 > \frac{1}{2}$,  $\overline{\text{rc}}(G_n) = 2nc_1 + O(1)$
\end{enumerate}
\end{theorem}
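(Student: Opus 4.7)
\medskip
\noindent\textbf{Plan for the proof of Theorem 3.}

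The first step is a structural identity. I would prove
\[
\mathcal{RC}(G_n, w_0) \;=\; n + \max(U^* - 1, 0),
\]
where $U^*$ is the number of vertices in the ``last'' part $V_I$ still unvisited at the first time $T$ by which every other part has been entirely cleaned. Before $T$ no revisit is possible because, whichever $V_j$ the crawler sits in, the neighbourhood $V\setminus V_j$ contains $V_I$ and hence a dirty vertex; from $T$ onwards the crawler alternates between fresh vertices of $V_I$ and revisited vertices of $V\setminus V_I$, adding $2U^* - 1$ steps when $U^* \ge 1$. The problem thus reduces to estimating $\mathbb{E}[U^*]$ in each regime.

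For part~(iii), where $c_1 > 1/2$, Theorem~1(ii) immediately forces $U^* \ge (2c_1-1)n$ deterministically, and the same ``no two consecutive $V_1$-visits'' count pins down $V_I = V_1$. Coding the first $T$ visits as a binary string, with $1$ marking a $V_1$-visit (the string has no consecutive $1$s and must end in $0$, since the $T$-th visit is the one that completes $V\setminus V_1$), a direct count yields
\[
U^* \;=\; (2c_1-1)n + N_{00} + \mathbf{1}\{v_1 \notin V_1\},
\]
where $N_{00}$ is the number of pairs of consecutive visits both lying in $V\setminus V_1$. It therefore suffices to show $\mathbb{E}[N_{00}] = O(1)$. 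I would bound the per-step probability of a non-alternation by exploiting two facts: because the initial weighting is a uniform random permutation, vertices ``skipped'' at an earlier step (because they lay in the then-current part) carry implicit lower bounds on their weights, biasing the next argmin towards $V_1$; and the ratio $|U_{\ne 1}(t)|/|U_1(t)|$ is driven to zero well before $T$ in the $c_1 > 1/2$ regime. Together these should deliver a per-step bound summable to $O(1)$.

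For parts~(i) and~(ii), $c_1 \le 1/2$ means that no part dominates, the identity of $V_I$ is random, and $U^*$ may even be zero. Here I would view the vector of unvisited counts $(|U_1(t)|, \ldots, |U_k(t)|)$ as a Markov chain along the trajectory, show concentration about the deterministic fluid limit in which every part is exhausted near time $n$, and identify $U^*$ with the end-point fluctuation. For $c_1 < 1/2$ the limit is generic and a tight coupling should yield $\mathbb{E}[U^*] = O(1)$; for $c_1 = 1/2$ one sits on a critical boundary and a CLT-type calculation should produce $\mathbb{E}[U^*] = O(\sqrt{n})$. These bounds feed directly into the stated estimates of $\overline{\text{rc}}(G_n)$.

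The main obstacle will be the $O(1)$ bound on $\mathbb{E}[N_{00}]$ in part~(iii). A naive analysis in which the next vertex is taken to be uniform on the currently-eligible unvisited set predicts $\mathbb{E}[N_{00}] = \Theta(n)$, and a small example (take $k=3$ with $|V_1|=3$ and $|V_2|=|V_3|=1$) already shows that this conditional uniformity is simply false. Quantifying the true bias created by the history of past skips, and verifying that it tips the balance strongly enough to improve the naive $\Theta(n)$ estimate all the way down to $O(1)$, will be the technical heart of the argument.
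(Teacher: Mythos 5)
Your opening reduction is exactly the paper's: the identity $\mathcal{RC}(G_n,w_0)=n+S_{w_0}-1$, where $S_{w_0}$ is the surplus of the last part, is the same observation the paper records before Theorem 2, and your part~(iii) bookkeeping (no two consecutive $V_1$-visits, hence $U^*\ge(2c_1-1)n$ and the $N_{00}$ identity) is sound. The gap is everything after that: the three estimates that actually constitute the theorem --- $\mathbb{E}[N_{00}]=O(1)$ for $c_1>\frac12$, $\mathbb{E}[U^*]=O(1)$ for $c_1<\frac12$, and $\mathbb{E}[U^*]=O(\sqrt n)$ at criticality --- are each asserted (``should deliver'', ``should yield'', ``should produce'') rather than proved, and you yourself point out that the naive per-step analysis gives $\Theta(n)$ for $N_{00}$. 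A direct dynamical analysis of the crawler (per-step non-alternation probabilities, a Markov chain on the vector of unvisited counts, a fluid limit plus a CLT for the endpoint) is not obviously tractable, precisely because the conditional law of the next vertex given the history is distorted by the ``implicit lower bounds from past skips'' you mention; nothing in the proposal quantifies that bias, and a CLT alone would in any case give distributional control rather than the bound on an expectation that the theorem requires.

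The idea you are missing is that the surplus can be bounded by a quantity depending only on the initial permutation, with no reference to the dynamics at all. The paper defines $m_i$ as the largest $x$ such that for some $y\ge0$, $y+x$ of the $2y+x$ initially cleanest vertices lie in $V_i$ --- equivalently the running maximum of the $\pm1$ path $X(t)$ obtained by scanning vertices from cleanest to dirtiest and stepping up on $V_i$ --- and proves combinatorially (Lemma 1) that $S(i)\le m_i$. Under a uniform weighting $X$ is a random walk bridge, so $\mathbb{E}[m_i]$ is controlled by comparison with the unconditioned walk: for $c_i<\frac12$ the walk has negative drift and its record has geometric tails, giving $\mathbb{E}[m_i]\le 2c_i/(1-2c_i)$; for $c_i>\frac12$ one reverses time on the bridge to reduce to the previous case, giving $\mathbb{E}[m_i]\le(2c_1-1)n+O(1)$; for $c_i=\frac12$ one quotes the exact asymptotic $\sqrt{\pi n/8}$ for the expected maximum of a symmetric bridge. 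Your $N_{00}$ is essentially the count of down-down steps of this walk read from the dirty end, so the honest way to bound it is precisely this static record argument; as written, the proposal identifies the right targets but does not contain a proof of any of the three parts.
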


In particular we note that for $c_1 \neq \frac{1}{2}$, $\overline{\text{rc}}(G_n) = \text{rc}(G_n)+O(1)$, which refines Theorem 6 in \cite{bonato2015robot} if we take $G_n = K_{n/k}^{k}$, the complete $k$-partite graph induced by $k$ vertex sets each of size $\frac{n}{k}$.

\subsection{Proofs}

We begin with the more straightforward proofs of theorems 1 and 2.

\begin{proof}[Proof of Theorem 1]
It is straight forward to construct a Hamiltonian path to verify part (i). For part (ii) we note that once the crawler is in set $V_1$ (which takes at least 1 step) it must return at least $c_1n - 1$ times. Whenever the crawler is in set $V_1$ it will take at least 2 steps of the algorithm before the crawler returns since there are of course no edges between vertices in $V_1$. Hence $\text{rc}(G_n) \ge 1 +2(nc_1 -1)$. Noting that $|V_1| > |V \setminus V_1|$, the bound can be achieved if the crawler starts in $V_1$ and oscillates between $V_1$ and $V \setminus V_1$, e.g. if $w_0(v) < w_0(u)$ $\forall v \in V_1, u \in V \setminus V_1$.
\end{proof}

For $1 \le i \le k$ define the surplus of vertex set $i$ ($=:S_{w_0}(i)$) to be the number of uncleaned vertices remaining in $V_i$ at the moment all vertices in $V \setminus V_i$ have been cleaned. Clearly $S_{w_0}(i) = 0$ for all but one value of $i$. Further define $S_{w_0} = \sum_{i=1}^{k} S_{w_0}(i) = \max_{1 \le i \le k}(S_{w_0}(i))$. A crucial observation is that $\mathcal{RC}(G_n, w_0) = n + S_{w_0} - 1$. Indeed suppose $S_{w_0}(i) > 0$, then immediately after the time step ($t = n-S_{w_0}(i)$) when all vertices in $V \setminus V_i$ have been cleaned the crawler will alternate between $V_i$ and $V \setminus V_i$ until all remaining $S_{w_0}(i)$ uncleaned vertices of $V_i$ have been cleaned which will take a further $2S_{w_0}(i) - 1$ steps.

\begin{proof}[Proof of Theorem 2]
Clearly $S_{w_0} \le \max_{1 \le i \le k} |V_i| = c_1n$. Part (i) now amounts to showing that if  $c_2 \le \frac{1}{2}(1-c_1)$ then $\exists w_0$ such that $S_{w_0} = c_1n$. This follows in part since if $k \ge 4$ it is possible to clean $V \setminus V_1$ in $|V \setminus V_1|$ steps using Theorem 1 (i) on the complete $(k-1)$-partite graph induced by vertex sets $V_2, ..., V_k$, in which case $S_{w_0}(1) = c_1n$. Finally, if $k = 3$ then necessarily $c_2 = c_3$ and again it is of course possible to clean $V \setminus V_1$ in $|V \setminus V_1|$ steps simply by alternating between $V_2$ and $V_3$ for the first $2c_2 n$ steps.

Suppose now $c_2 > \frac{1}{2}(1-c_1)$ and $S_{w_0}(2) = 0$. When $V_2$ is fully cleaned there are uncleaned vertices elsewhere in $V$. We first note that it takes at least $2nc_2 - 1$ steps to clean all vertices of $V_2$ at which point there are at most $n-2nc_2+1$ vertices in $V$ not yet visited by the crawler. From this point it will take at most $2(n-2nc_2+1)-1$ steps to clean the remainder of the vertices, which gives the required upper bound  RC$(G_n) \le 2(n-2nc_2+1) -1 + 2nc_2 - 1 = 2n(1-c_2)$.  

Consider $w_0 \in \Omega_n$ with set $V_2$ being the $|V_2|$ dirtiest, and $V_1$ the $|V_1|$ cleanest vertices of $V$. That is $\bigcup_{j = 0}^{c_2n-1} w_0^{-1}(-n+j) = V_2$ and $\bigcup_{j = 1}^{c_1n} w_0^{-1}(-j) = V_1$, then the bound is attained.

\end{proof}

We now turn our attention to Theorem 3, the main result of the section. 

Let $m_{i} = \max(x: \exists y \ge 0$ s.t. $y+x$ of the $2y+x$ cleanest vertices lie in set $V_i)$. That is, $m_{i} = \max(x: \exists y \ge 0$ s.t. $\bigcup_{j = 1}^{2y+x}w_{0}^{-1}(-j) \cap V_i =y+x)$. Stochastically $m_{i}$ is the record of an $n$ step simple random walk, conditioned to be at a fixed position at time $n$. This random walk starts at the origin at time 0 and jumps up (down) by 1 at time $t$ if $w_{0}^{-1}(-t) \in V_i$ ($w_{0}^{-1}(-t) \in V \setminus V_i$), and finishes at time $n$ in position $|V_i| - |V \setminus V_i| = 2c_i - n$. More on this shortly.

\begin{lemma}
$S(i) \le m_i$.
\end{lemma}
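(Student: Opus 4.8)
The plan is to translate the surplus into a statement about the records of two $\pm 1$ walks and then to compare those walks. Throughout, let $i$ be the part carrying the surplus, and write $s=S(i)$ (only one part can have positive surplus, and for every other part $S(i)=0\le m_i$, the latter because the walk starts at $0$, so $m_i\ge 0$). The first ingredient is the structural fact that the crawler cleans the vertices of each fixed part in order of increasing weight, i.e.\ dirtiest first: whenever it cleans a vertex $v\in V_j$ it has just arrived from a vertex outside $V_j$, so all uncleaned vertices of $V_j$ are admissible and $v$ must be the dirtiest uncleaned vertex of $V_j$ at that instant. Consequently, at the time $n-s$ when $V\setminus V_i$ has just been emptied, the $s$ uncleaned vertices are exactly the $s$ cleanest vertices of $V_i$, and the set $C$ of cleaned vertices is all of $V\setminus V_i$ together with the dirtiest $c_in-s$ vertices of $V_i$.

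Next I would set up the walks. Alongside the cleanest-first walk defining $m_i$, consider the dirtiest-first walk $Y_\tau$, which goes up (down) by $1$ according as the $\tau$-th dirtiest vertex lies in $V_i$ (in $V\setminus V_i$); note $Y_n=|V_i|-|V\setminus V_i|=2c_in-n$. Since the cleanest $t$ and the dirtiest $n-t$ vertices partition $V$, the two walks satisfy $X_t=Y_n-Y_{n-t}$, whence $m_i=\max_t X_t=Y_n-\min_\tau Y_\tau$. Let $g_j$ be the analogous $\pm1$ walk read off in the crawler's order of first visits. By the description of $C$ above, $g_{n-s}=(c_in-s)-(n-c_in)=Y_n-s$. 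The lemma $s\le m_i$ is therefore equivalent to $\min_\tau Y_\tau\le Y_n-s$, and since $g_{n-s}=Y_n-s$ it suffices to establish the clean comparison $\min_\tau Y_\tau\le\min_j g_j$: the dirtiest-first walk dips at least as low as the crawler's walk. (Note $\min_j g_j$ is attained for $j\le n-s$, since $g$ only ascends during the final oscillation.)

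The heart of the argument — and the step I expect to be the main obstacle — is this comparison, which must overcome the fact that the crawler does not clean in global dirtiest-first order. The key observation is that its only deviations are ``skips'': if the globally dirtiest uncleaned vertex $w$ lies in the crawler's current part $V_a$, it cannot be taken, so the crawler cleans the dirtiest admissible vertex $z$ (necessarily cleaner than $w$) instead. Crucially $z$ lies in some $V_b\ne V_a$, so at the next step $w$ is again globally dirtiest and now admissible, and is cleaned immediately; thus every skip is repaid after exactly one step, at most one skip is ever pending, and at all other times the crawler's cleaned set is precisely a dirtiest-prefix, giving $g_j=Y_j$ there. I would then finish by examining a time $j^{*}$ attaining $\min_j g_j$: the incoming step is necessarily a descent, so either $C_{j^{*}}$ is a dirtiest-prefix (whence $\min_\tau Y_\tau\le Y_{j^{*}}=g_{j^{*}}$), or $j^{*}$ is a pending skip made while sitting in $V\setminus V_i$ (for which the one-step bookkeeping gives $Y_{j^{*}}=g_{j^{*}}$ directly), or it is a skip made while sitting inside $V_i$, in which case $g_{j^{*}}=g_{j^{*}-2}$ and one recurses to the earlier minimum. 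Handling these cases cleanly, and in particular verifying the one-step repayment of skips in full generality, is where the real work lies.
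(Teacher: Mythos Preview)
Your reformulation is correct and elegant: the identity $m_i=Y_n-\min_\tau Y_\tau$ and the computation $g_{n-s}=Y_n-s$ reduce the lemma exactly to $g_{n-s}\ge\min_\tau Y_\tau$. The paper takes a completely different and more direct route: it fixes the specific vertex $x$ which is the $(m_1+1)$-st cleanest in $V_1$, locates the first moment the crawler enters the block of the $m_1+1+t$ cleanest vertices (where $t$ counts the $V\setminus V_1$ vertices cleaner than $x$), and argues from the definition of $m_1$ that at the instant $x$ is cleaned there remain uncleaned vertices in $V\setminus V_1$. No walk comparison is needed.

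Your proposed argument, however, has a genuine gap in the structural claim ``at most one skip is ever pending, and at all other times the crawler's cleaned set is precisely a dirtiest-prefix''. The one-step repayment of the \emph{skipped} vertex $w$ is indeed immediate, but this does not restore the prefix property, because the skipped-\emph{to} vertex $z$ can lie strictly beyond the next rank. Concretely, take $i=1$ and dirtiest-first part sequence $1,1,1,2,\dots$. The crawler does $d_1\to d_4\to d_2$, and after this repayment the cleaned set is $\{d_1,d_2,d_4\}$, which is not a dirtiest-prefix ($d_3$ is missing). At the next step the crawler, now at $d_2\in V_1$ with dirtiest uncleaned $d_3\in V_1$, must skip again to $d_5$, leaving \emph{two} extras $\{d_4,d_5\}$. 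So several skips can be pending simultaneously, and the invariant you rely on for cases (a) and (b) of your minimum analysis (that $C_{j^*-1}$ is a prefix, hence $g_{j^*-1}=Y_{j^*-1}$) is simply not available in general.

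Your case analysis at the minimum is therefore incomplete: you have not ruled out the situation where step $j^*$ is a genuine descent to the globally dirtiest uncleaned vertex (so not a skip in your sense) yet $C_{j^*}$ still fails to be a prefix because of lingering extras from earlier skip-repay cycles. Moreover, your case (b) cannot actually occur at a minimum: if the crawler sits in $V_a$ with $a\ne i$ and skips over $w\in V_a$, then the subsequent repayment step cleans $w\in V\setminus V_i$ and is itself a further descent, so $g_{j^*+1}<g_{j^*}$. The difficulty you flag --- ``verifying the one-step repayment of skips'' --- is the easy part; the hard part, which you have not addressed, is that repayment does not restore the prefix. The walk-comparison strategy may well be completable, but it needs a genuinely different invariant than the one you propose; the paper's direct argument sidesteps all of this.
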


\begin{proof}

W.l.o.g. take $i = 1$. Consider $x \in V_1$ defined to be the $(m_1+1)^{\text{st}}$ cleanest vertex in $V_1$, and suppose it is also the $(m_1+1+t)^{\text{th}}$ cleanest vertex in $V$ overall, (so $w_{0}(x) = -(m_1+1+t)$). So, there are $m_1$ vertices cleaner than $v$ in $V_1$ and $t$ cleaner than $v$ in $V \setminus V_1$. Clearly $t \ge 1$ by the definition of $m_1$. Let $v$ be the first vertex cleaned of the $(m_1+1+t)$ cleanest of $V$. If $v=x$ then we are done. If $v \ne x$ then $v \in V \setminus V_1$ and $v$ must have been cleaned immediately after some node $u \in V_1$ where $w_{0}(u) = -(m_1+1+t+l)$ some $l > 0$ and $\cup_{i=1}^{l} w_{0}^{-1}(-m_1-1-t-i) \subset V_1$. By the definition of $m_1$, necessarily $l < t$, (and all other nodes must have already been cleaned by the crawler). It is clear how the crawler will then proceed, alternating between $V_1$ and $V \setminus V_1$ until $x$ is cleaned at which point there will be $t-l>0$ uncleaned vertices in $V \setminus V_1$, and hence $S(1) \le m_1$.

\end{proof}

It is not difficult to construct a graph with some initial vertex weights such that $S(1) < m_1$. As a simple example, consider the complete 3-partite graph $G$  induced by $V_1, V_2, V_3$ with $|V_1| = 3, |V_2| = 3, |V_3| = 1$ and  $V_1$ consisting of the 3 cleanest vertices of $G$. In this case $m_1 = 3$ but $S(1) \le 2$.

We now make the link between $m_1$ and the record of a simple random walk bridge, (noting the start and end points of this bridge can be different). For $0 \le t \le n$ define $U(t) := |v \in V_1, w_0(v) \ge -t|$, the number of vertices in $V_1$ initially among the $t$ cleanest of $V$, $D(t) := |v \in V \setminus V_1, w_0(v) \ge -t| = t - U(t)$ and  $X(t) := U(t)-D(t)$. 

Let $(Z(t))_{t \ge 0}$ be a random walk on $\mathbb{Z}$ starting from $Z(0) = 0$ with $p = \mathbb{P}(Z(t+1)-Z(t)=1) = c_1$ and $q = \mathbb{P}(Z(t+1)-Z(t) = -1) = 1-p$ $\forall t \ge 0$. Observe that $(X(t))_{0 \le t \le n} \sim (Z(t)|Z(n)=|V_1|-|V \setminus V_1|)_{0 \le t \le n}$, and hence $X(t)$ is a random walk bridge starting at $X(0) = 0$ and ending at $X(n) = |V_1|-|V \setminus V_1|$. We could equally have defined $m_1 = \max_{0 \le t \le n} \{ X(t)  \}$. 

\begin{lemma}
For $c_i < 0.5$, $\mathbb{E}(m_i) \le  \frac{2c_i}{1-2c_i}$.
\end{lemma}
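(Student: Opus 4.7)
The plan is to identify $m_i$ with the maximum of a uniformly random lattice path bridge and apply the reflection principle to obtain an exact expression for $\mathbb{P}(m_i \ge k)$, which I will then sum to bound $\mathbb{E}(m_i)$.

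First I would note that the uniform initial weighting $w_0$ induces the uniform distribution on the $\binom{n}{c_i n}$ lattice paths from $(0, 0)$ to $(n, (2 c_i - 1) n)$ consisting of $c_i n$ unit up-steps and $(1 - c_i) n$ unit down-steps (the step at time $t$ being $+1$ iff $w_0^{-1}(-t) \in V_i$). By the reflection principle, for $1 \le k \le c_i n$ the paths that touch the line $y = k$ are in bijection with the paths from $(0, 2k)$ to $(n, (2 c_i - 1) n)$ via reflection of the segment before the first hit of $y=k$, and this latter set has size $\binom{n}{c_i n - k}$. Hence
\begin{equation*}
\mathbb{P}(m_i \ge k) \;=\; \frac{\binom{n}{c_i n - k}}{\binom{n}{c_i n}} \;=\; \prod_{j = 0}^{k - 1} \frac{c_i n - j}{(1 - c_i) n + 1 + j}.
\end{equation*}

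Setting $r = c_i / (1 - c_i)$, each factor in the product is at most $r$, and $r < 1$ since $c_i < 1/2$. Summing the resulting geometric tail gives
\begin{equation*}
\mathbb{E}(m_i) \;=\; \sum_{k = 1}^{c_i n} \mathbb{P}(m_i \ge k) \;\le\; \sum_{k = 1}^{\infty} r^k \;=\; \frac{r}{1 - r} \;=\; \frac{c_i}{1 - 2 c_i},
\end{equation*}
which is in fact twice as strong as the claimed bound, and so implies it.

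There is no real obstacle here: the only delicate point is verifying that the reflection principle applies to lattice paths with a fixed number of up- and down-steps (rather than to unconditioned walks). This is standard and the bijection is essentially given above. As a sanity check, the same bound can be obtained more probabilistically by comparing the bridge to the unconditioned biased walk with $\mathbb{P}(\text{step} = +1) = c_i$, whose running maximum is known to be geometric with parameter $r$ and expectation $c_i / (1 - 2 c_i)$; the bridge is this walk conditioned to end at a typical (indeed, the mean) value, so heuristically its maximum should be of the same order, and the reflection calculation above makes this rigorous.
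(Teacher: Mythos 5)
Your proof is correct, but it takes a genuinely different route from the paper. The paper compares the bridge $X(t)$ to the \emph{unconditioned} walk $Z(t)$ with negative drift: it first derives the geometric tail $h_j=\bigl(\tfrac{c_1}{1-c_1}\bigr)^j$ for $\max_t Z(t)$ from a harmonic recurrence, then transfers this to the bridge via a coupling (conditioning on $\{Z(n)\ge |V_1|-|V\setminus V_1|\}$ and flipping a random subset of down-steps), paying a factor $2$ through $\mathbb{P}(Z(n)\ge \mathbb{E}Z(n))\ge \tfrac12$; this yields $\mathbb{E}(m_i)\le \tfrac{2c_i}{1-2c_i}$. You instead work directly with the bridge as a uniformly random lattice path with $c_in$ up-steps and apply the reflection principle to get the exact tail $\mathbb{P}(m_i\ge k)=\binom{n}{c_in-k}/\binom{n}{c_in}$, each factor of which is at most $r=\tfrac{c_i}{1-c_i}$; the reflection is valid here since the endpoint $(2c_i-1)n$ lies strictly below every level $k\ge 1$, so the first-passage decomposition is a genuine bijection. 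Your route avoids both the coupling and the median inequality and gives the sharper constant $\tfrac{c_i}{1-2c_i}$ (a factor of $2$ better, which of course implies the stated bound). The trade-off is that your argument leans on the exact combinatorics of $\pm1$ lattice paths, whereas the paper's stochastic-domination argument is softer and reuses the unconditioned-walk tail $h_j$, which fits the time-reversal trick used immediately afterwards in the Corollary for $c_i>\tfrac12$; your reflection computation would also adapt to that case, but the paper's structure makes the reuse explicit.
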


\begin{proof}
Again, w.l.o.g. take $i = 1$. Let $h_j = \mathbb{P}(\max_{t \ge 0}(Z(t)) \ge j)$. As a simple consequence of the Markov property, for $j \ge 1$:

$$
\begin{aligned}
h_j &= \mathbb{P}(Z(1) = 1)\mathbb{P}(\max_{t \ge 0}(Z(t)) \ge j|Z(1) = 1) \\
&\quad+\mathbb{P}(Z(1) = -1)\mathbb{P}(\max_{t \ge 0}(Z(t)) \ge j|Z(1) = -1) \\
 &= c_1 h_{j-1} +(1-c_1)h_{j+1}\\
\end{aligned}
$$

Using $c_1 < \frac{1}{2}$ together with the initial condition $h_0 = 1$ we find that $h_j = (\frac{c_1}{1-c_1})^{j}$ $\forall j \ge 0$. Now

$$
\begin{aligned}
\mathbb{P}(m_1 \ge j) &= \mathbb{P}(\max_{0 \le t \le n}(X(t)) \ge j) \\
&= \mathbb{P}(\max_{0 \le t \le n}(Z(t)) \ge j | Z(n)=|V_1|-|V \setminus V_1|) \\
&\le \mathbb{P}(\max_{0 \le t \le n}(Z(t)) \ge j | Z(n) \ge |V_1|-|V \setminus V_1|) \\
&\le \frac{\mathbb{P}(\max_{0 \le t \le n}(Z(t)) \ge j )}{\mathbb{P}(Z(n) \ge |V_1|-|V \setminus V_1|)} \\
&\le 2\mathbb{P}(\max_{0 \le t \le n}(Z(t)) \ge j) \\
\end{aligned}
$$

The first inequality follows from a simple coupling argument. If we are given a realisation of $(X(t))_{1 \le t \le n}$, and some integer $0 \le C \le |V_1|$, we can define the random path $(Z_1(t))_{1 \le t \le n}$ by taking $C$ of the down steps of $X(t)$ chosen uniformly at random among all of the ${|V_1|}\choose{C}$ possibilities and flipping them to up steps. Clearly, $Z_1(t) \ge X(t)$ $\forall 1 \le t \le n$, and hence $\max_{0 \le t \le n}(Z_{1}(t)) \ge \max_{0 \le t \le n}(X(t))$. If we initially let $C \sim \frac{1}{2}(Z(n) - (|V_1|-|V \setminus V_1|)) | Z(n) \ge |V_1| - |V \setminus V_1|$ then it is also clear that  $Z_1(t) \sim Z(t) | Z(n) \ge |V_1|-|V \setminus V_1|$.

Concluding the argument

$$
\mathbb{E}(m_1) \le 2\sum_{j = 1}^{\infty}\mathbb{P}(\max_{0 \le t \le n}(Z(t)) \ge j) = 2\sum_{j = 1}^{\infty}h_j = \frac{2c_1}{1-2c_1}
$$
\end{proof}

We can now conclude part (i) of Theorem 3. For $c_1 < 0.5$:

$$
\overline{\text{rc}}(G_n,w_0) = \mathbb{E}(n + S_{w_0}-1) \le n + \mathbb{E} \left( \sum_{i = 1}^{k} m_i \right) \le n + \sum_{i = 1}^{k} \frac{2c_i}{1-2c_i} = n+O(1)
$$

In proving Lemma 2, we linked $m_1$ to the maximum of a Random Walk Bridge $X(t)$ with the property that $X(0) > X(n)$, and eventually used the expected maximum level reached by a Random Walk with negative drift. Using a similar strategy to conclude part (ii) of Theorem 3 where $c_1 > 0.5$ wouldn't work since of course, the expected maximum reached by a Random Walk with positive drift is unbounded. To navigate this problem we will reverse time on the Random Walk Bridge.


\begin{cor}
For $c_i > \frac{1}{2}$, $\mathbb{E}(m_i) \le  2c_1 n -n + \frac{2(1-c_i)}{2c_i-1}$
\end{cor}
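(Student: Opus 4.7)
The plan is to reduce the statement to Lemma 2 by time-reversing the random walk bridge $X(t)$. Observe first that since $c_1 \ge c_i$ for every $i$ and $\sum_i c_i = 1$, the hypothesis $c_i > 1/2$ forces $i=1$, so one only needs to bound $\mathbb{E}(m_1)$, where $X$ is a bridge from $0$ to $(2c_1-1)n$ driven by a walk of up-probability $c_1>1/2$. The expected maximum of such a walk with positive drift is unbounded in $n$, so Lemma 2 cannot be applied directly; reversing time will instead expose a walk with negative drift whose endpoint absorbs the linear-in-$n$ contribution.

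Define $Y(t) := X(n-t)$. Because the increments of $X$ are uniformly distributed over all arrangements of $c_1 n$ up-steps and $(1-c_1)n$ down-steps, the steps of $Y$ (the reversed and negated increments of $X$) are uniformly distributed over all arrangements of $(1-c_1)n$ up-steps and $c_1 n$ down-steps. Hence $Y$ is a bridge from $(2c_1-1)n$ to $0$ driven by a walk of up-probability $1-c_1 < 1/2$. Since $\max_t X(t)=\max_t Y(t)=(2c_1-1)n+\max_t Y'(t)$, where $Y'(t):=Y(t)-(2c_1-1)n$ is a bridge from $0$ to $(1-2c_1)n$ with up-probability $1-c_1$, this is exactly the setting of Lemma 2 after the substitution $c_1 \mapsto 1-c_1$. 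Applying that lemma verbatim gives $\mathbb{E}[\max_t Y'(t)] \le \frac{2(1-c_1)}{1-2(1-c_1)} = \frac{2(1-c_1)}{2c_1-1}$, and adding $(2c_1-1)n = 2c_1 n - n$ yields the corollary.

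The main obstacle is the time-reversal step itself: one must justify carefully that reversing a bridge with up-probability $c_1$ yields a bridge with up-probability $1-c_1$. This is not automatic because bridge increments are not independent, only exchangeable; what rescues the argument is that reversal is a deterministic permutation and so preserves the uniform arrangement, while negation swaps the roles of the two step types. The remaining ingredients of Lemma 2, namely the geometric tail $h_j=((1-c_1)/c_1)^j$ and the flip-coupling from bridge to unconditioned walk, transfer without change, because the new target endpoint $(1-2c_1)n$ is negative and so lies at or below the median of the corresponding binomial, preserving the factor-of-$2$ slack used in the coupling step.
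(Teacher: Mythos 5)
Your proposal is correct and follows essentially the same route as the paper: reverse time on the bridge to turn the positive-drift walk into a negative-drift one started at $2c_1n-n$, then invoke Lemma 2 with $c_1$ replaced by $1-c_1$. The paper states the key distributional identity $\bigl(\widehat{X}(t)\mid c_1=\alpha\bigr) \sim \bigl(2c_1n-n+X(t)\mid c_1=1-\alpha\bigr)$ without elaboration, whereas you spell out the exchangeability justification and check that the coupling in Lemma 2 still applies; these are worthwhile additions but not a different argument.
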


\begin{proof}

For $1 \le t \le n$ define $\widehat{X}(t) = X(n-t)$. $\widehat{X}(t)$ is again a Random Walk Bridge, but with $\widehat{X}(0) = 2c_1 n - n$ and $\widehat{X}(n) = 0$. The key point here is that $\left( \widehat{X}(t)|c_1 = \alpha \right) \sim \left( 2c_1n - n + X(t)|c_1 = 1-\alpha \right)$, so 
$$
\mathbb{E}(m_i) = \mathbb{E}\left( \max_{0 \le t \le n} \{ X(t)  \}\right) =  \mathbb{E}\left( \max_{0 \le t \le n} \{ \widehat{X}(t)  \}\right)  \le  2c_1 n -n + \frac{2(1-c_i)}{2c_i-1}
$$ 
by Lemma 2.

\end{proof}

We have now shown that for $c_1 > 0.5$, $\overline{\text{rc}}(G_n) \le n +\mathbb{E}(m_i) \le  2c_1 n + \frac{2(1-c_i)}{2c_i-1}$ which completes the proof of part (iii) of Theorem 3.

Finally, Godreche et. al. \cite{godreche2015record} prove that for $c_1 = 0.5$, $\mathbb{E}(\max_{0 \le t \le n}(X(t))) = \sqrt{\frac{\pi n}{8}}$. Part (ii) of Theorem 3 follows.




\section{Erdos-Renyi Random Graph}

We now turn our attention to open problem 2 in \cite{bonato2015robot}. In their paper Bonato et. al. considered the robot crawler performed on $G(n,p)$ with $np \ge \sqrt{n\log{n}}$. We will prove the 2 results in Theorem 4 below which are similar to Corollary 2 and Theorem 8 in their work, but for much sparser graphs:

\begin{theorem}
Let $p = f(n)\log{n}/n$ for some non-decreasing function $f > 28$. Then 
\begin{enumerate}[(i)]
\item ${\text{RC}}(G(n,p)) \le n^{2+o(1)}$ a.a.s. 
\item $\frac{ \mathcal{RC}(G(n,p),w_0)}{\left(n + \frac{n}{f(n)}\right)} \stackrel{p}{\longrightarrow} 1 \text{ as } n \to \infty$
\end{enumerate}
\end{theorem}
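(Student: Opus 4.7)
The plan is to decompose $\mathcal{RC}(G, w_0) = n + R$, where $R$ counts ``revisits'': a revisit at time $t+1$ occurs precisely when the crawler's current vertex $v_t$ has no neighbour in the unvisited set $V \setminus T_t$, with $T_t$ denoting the set of distinct vertices visited by time $t$. I would stratify the analysis by the ``level'' $j = |V \setminus T_t|$: each level $j \in \{1,\ldots,n\}$ is entered and left exactly once, and the time spent at level $j$ equals $1$ plus the number of revisits incurred before the $j$-th remaining unvisited vertex is cleaned.

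For part (i), I would invoke a coarse polynomial bound on the crawler length for connected graphs, of the form $\mathcal{RC}(G, w_0) = O(n^2)$, as established in \cite{bonato2015robot}, combined with the standard fact that $G(n,p)$ is a.a.s.\ connected well above the connectivity threshold. Since $f > 28$ forces $np \gg \log n$, this immediately gives $\text{RC}(G(n,p)) \le n^{2+o(1)}$ a.a.s.

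For part (ii) the heuristic behind the bound $n + n/f(n)$ is that at level $j$ a ``typical'' visited vertex fails to have an unvisited neighbour with probability approximately $(1-p)^j$, making the expected wander time per level of order $1/(1-(1-p)^j)$ and the total expected length
$$
\sum_{j=1}^{n} \frac{1}{1-(1-p)^j} \;=\; n + \sum_{j=1}^{n} \frac{(1-p)^j}{1-(1-p)^j} \;=\; n + (1+o(1))\,\frac{\log(1/p)}{p} \;=\; n + (1+o(1))\,\frac{n}{f(n)},
$$
since the tail sum is dominated by $j \ll 1/p$, where the summand is $\sim 1/(jp)$, while the contribution of $j \gg 1/p$ is only $O(1/p) = o(n/f(n))$. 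To make this rigorous I would split into a bulk phase ($j$ large) and an end phase ($j \le j_0$ for some threshold like $j_0 = n \log\log n /(f(n)\log n)$). The bulk phase is absorbed into the error term by standard a.a.s.\ properties of $G(n,p)$ (concentration of degrees, expansion), which ensure that visited vertices typically retain unvisited neighbours and that bulk revisits are $o(n/f(n))$. In the end phase one exposes edges only as they are first probed and applies Azuma's inequality to a Doob martingale of revisits (indexed by level) to upgrade the mean calculation above to convergence in probability.

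The main technical obstacle will be the end phase: the identity of the last few unvisited vertices depends intricately on the past history, and the edges among them cannot naively be treated as fresh Bernoulli$(p)$ variables. I would handle this by observing that, conditional on the process history up to the start of level $j$, the internal edge pattern within the still-unvisited set has not yet been probed, so conditionally $V \setminus T_t$ still induces a $G(j,p)$ subgraph. This exchangeability, combined with the a.a.s.\ degree-regularity of $G(n,p)$, reduces per-level wander times to variables controllable by the corresponding geometric-type estimates; summing these via martingale concentration produces the claimed total $n/f(n) + o(n + n/f(n))$, yielding matching upper and lower bounds and hence the convergence in probability in (ii).
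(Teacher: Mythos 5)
Two genuine gaps. For part (i), you lean on ``a coarse polynomial bound $\mathcal{RC}(G,w_0)=O(n^2)$ for connected graphs, as established in \cite{bonato2015robot}.'' No such general bound is available there; the general bound is $\text{RC}(G)\le n(\Delta+1)^{d}$, which is exponential in the diameter $d$. The entire content of the paper's proof of (i) is to make this bound usable for $G(n,p)$: a Chernoff/union bound gives $\Delta\le 2np$ a.a.s., the Chung--Lu diameter result gives $d=(1+o(1))\log n/\log(np)$, and combining yields $n(\Delta+1)^{d}=n^{2+o(1)}$. If an $O(n^2)$ bound held for all connected graphs, part (i) would be trivial and these estimates redundant; as written, your part (i) rests on a citation to a result that does not exist.

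For part (ii), your heuristic $\sum_j (1-(1-p)^j)^{-1}=n+(1+o(1))n/f(n)$ identifies the right target, and your lower bound (per-level domination by a geometric) is essentially the paper's. The gap is in the upper bound's endgame. You rightly flag that the last unvisited vertices cannot be treated as fresh, but your proposed fix --- that the unvisited set still induces a $G(j,p)$ subgraph because its internal edges are unprobed --- addresses the wrong edges. When the crawler stalls it is sitting at a \emph{visited} vertex and needs an edge from there to the unvisited set; it is precisely these crawler-to-unvisited edges that may already have been exposed as absent earlier in the run, and the greedy rule sends the crawler back to the \emph{earliest-cleaned} (dirtiest) vertices, which are exactly the ones with the most prior exposure. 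The paper's three-phase construction and the ``jump number'' $J(v)$ exist to control this: they show that w.h.p.\ every vertex has at most $2n/7$ of its potential edges exposed, and that at every step of the final phase at least $n/7$ potential edges from the crawler's current position to target vertices remain unexposed, so each wander time is still dominated by a geometric and Lemma~3 applies. Your sketch has no substitute for this bookkeeping, and without it the conditional distribution of the increments in your proposed Azuma argument is not controlled.
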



In particular we note that if $f(n) \to \infty$ as $n \to \infty$, however slowly, then $\frac{ \mathcal{RC}(G(n,p),w_0)}{n} \stackrel{p}{\longrightarrow} 1 \text{ as } n \to \infty$. 

\begin{proof}[Proof of Theorem 4 (i)]
We will use Lemma 1(5) from \cite{bonato2015robot} which states that for any graph $G$, ${\text{RC}}(G) \le n(\Delta + 1)^{d}$ where $\Delta$ is the maximum degree of a vertex in $G$, and $d$ is the diameter of $G$.

The number of neighbours of $v$, a typical vertex of $G(n,p)$, is distributed $Bin(n-1,p)$. Hence,

$$
\begin{aligned}
\mathbb{P}(v\text{ has } \ge 2np \text{ neighbours}) &= \mathbb{P}(Bin(n-1,p) \ge 2np) \\
&= (1+o(1))\mathbb{P}(\mathcal{N}((n-1)p,(n-1)p(1-p)) \ge 2np) \\
&\le (1+o(1))\Phi \left(\frac{-np}{\sqrt{(n-1)p(1-p)}} \right) \\
&\le (1+o(1))\Phi \left(-\sqrt{np} \right) \\
&\le (1+o(1)) \frac{e^{-np/2}}{\sqrt{2 \pi np}} \\
&\le (1+o(1)) n^{-f(n)/2} \le (1+o(1)) n^{-14}
\end{aligned}
$$

Hence by the union bound, $\mathbb{P}(\Delta \ge 2np) \le (1+o(1)) n^{-13}$

In a 2004 paper \cite{chung2001diameter}, (which extends the work of Bollobas \cite{bollobas1981diameter}), Chung and Lu showed that a.a.s., $d = (1+o(1))\frac{\log{n}}{\log(np)}$ for $np \to \infty$. Putting these bounds together, a.a.s;

$$
\begin{aligned}
n(\Delta + 1)^{d} &\le n(2np)^{(1+o(1))\frac{\log{n}}{\log{np}}} \\
&= n \exp((1+o(1))\frac{\log{n}}{\log{np}} \log{2np}) \\
&= n^{2+o(1)} = o(n^3)
\end{aligned}
$$

\end{proof}

To prove part (ii), we will have use for the following lemma:

%

\begin{lemma}	
Let $Y = \sum_{i = 1}^{n/7} X_i$ where $X_i \sim Geom(1-(1-p)^{i})$ independently for each $1 \le i \le n/7$. For all $\varepsilon > 0 $,
$$
\mathbb{P}\left((1-\varepsilon)\left(\frac{n}{7} +\frac{n}{f(n)}\right) < Y < (1+\varepsilon)\left(\frac{n}{7} +\frac{n}{f(n)}\right)\right) \stackrel{n \to \infty}{\longrightarrow} 1 
$$
\end{lemma}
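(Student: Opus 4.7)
The plan is to apply Chebyshev's inequality after computing the mean and variance of $Y$. Writing $q_i := 1-(1-p)^i$, each $X_i \sim \text{Geom}(q_i)$ has $\mathbb{E}(X_i) = 1/q_i$ and $\text{Var}(X_i) = (1-q_i)/q_i^2 \le 1/q_i^2$, so by independence $\mathbb{E}(Y) = \sum_{i=1}^{n/7} 1/q_i$ and $\text{Var}(Y) \le \sum_{i=1}^{n/7} 1/q_i^2$. The task reduces to showing that $\mathbb{E}(Y) = (1+o(1))(n/7 + n/f(n))$ and that $\text{Var}(Y) = o(n^2)$; together with Chebyshev these imply the stated concentration.

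For the mean I would split the summation at the threshold $i^{\ast} := \lceil 1/p \rceil$, where $(1-p)^i$ transitions from being close to $1$ to being close to $0$. In the range $i > i^{\ast}$ one has $(1-p)^i \le 1/e$, so each summand is $1 + O((1-p)^i)$; since $\sum_{i>i^{\ast}}(1-p)^i = O(1/p)$, this range contributes $n/7 + O(1/p)$. In the range $i \le i^{\ast}$ the two-sided Bernoulli bound $ip/2 \le q_i \le ip$ (valid for $ip \le 1$) gives $1/q_i = 1/(ip) + O(1)$, so $\sum_{i\le i^{\ast}} 1/q_i = \log(1/p)/p + O(1/p)$. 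Plugging in $p = f(n)\log n/n$ one then has $\log(1/p)/p = (1+o(1))n/f(n)$, and summing the two contributions yields $\mathbb{E}(Y) = (1+o(1))(n/7 + n/f(n))$.

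The same split handles the variance: $\sum_{i \le i^{\ast}} 1/q_i^2 = O\bigl(\sum_{i \le 1/p} 1/(ip)^2\bigr) = O(1/p^2)$ and $\sum_{i > i^{\ast}} 1/q_i^2 = O(n)$, hence $\text{Var}(Y) = O(1/p^2) = O(n^2/(f(n)\log n)^2)$. Chebyshev then gives $\mathbb{P}(|Y - \mathbb{E}(Y)| > \tfrac{\varepsilon}{2}(n/7 + n/f(n))) = O(1/(\varepsilon f(n)\log n)^2) \to 0$ since $f(n) \ge 28$ and $\log n \to \infty$, and this combined with the mean asymptotic delivers the claim. The main technical obstacle is the asymptotic evaluation of $\sum 1/q_i$, specifically pinning down the $\log(1/p)/p \sim n/f(n)$ contribution from the small-$i$ regime; a useful sanity check is the continuous substitution $u = ip$ rewriting the sum as approximately $p^{-1}\int_p^{pn/7} du/(1-e^{-u}) = p^{-1}\bigl[\log(e^{pn/7}-1) - \log(e^p - 1)\bigr]$, which exhibits both the $n/7$ and the $\log(1/p)/p$ terms transparently.
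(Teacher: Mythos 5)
Your route is sound but genuinely different from the paper's. The paper never computes $\mathbb{E}(Y)$ or $\mathrm{Var}(Y)$ directly: for the upper tail it dominates $Geom(q)$ by $1+Exp(-\log(1-q))$, identifies $\sum_{i\le n/7}Exp(i)$ in distribution with the maximum of $n/7$ i.i.d.\ $Exp(1)$ variables, and applies a union bound; for the lower tail it dominates $Geom(1-(1-p)^i)$ from below by $Geom(ip)$, truncates the sum at $i\le n/(f(n)\log n)$, and runs a coupon-collector mean/variance/Chebyshev computation. Your single mean--variance--Chebyshev argument on $Y$ itself is more unified, and since the exponential-maximum trick only buys a polynomial tail bound that the lemma does not require, nothing is lost by it.

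Three of your intermediate estimates need patching, though none is fatal. First, $ip/2\le q_i\le ip$ only yields $1/(ip)\le 1/q_i\le 2/(ip)$, which is not $1/(ip)+O(1)$ when $ip$ is small; to get the additive form you need the second-order bound $q_i\ge ip(1-ip/2)$, whence $1/q_i\le \frac{1}{ip}(1+ip)=\frac{1}{ip}+1$ for $ip\le 1$. Second, the claim $\log(1/p)/p=(1+o(1))\,n/f(n)$ is false when $f$ grows polynomially (e.g.\ $f(n)=\sqrt{n}$ gives $\log(1/p)/p\sim n/(2f(n))$); the correct statement is $\log(1/p)/p=\frac{n}{f(n)}\left(1-\frac{\log(f(n)\log n)}{\log n}\right)$, and you must then observe that the correction is always $o\!\left(\frac{n}{7}+\frac{n}{f(n)}\right)$: either $f$ is bounded, in which case the correction is $o(n/f(n))$, or $f\to\infty$, in which case the entire $n/f(n)$ term is swallowed by $n/7$. (The paper keeps this correction term explicitly and absorbs it into $\varepsilon n/14$.) Third, $\mathrm{Var}(Y)=O(1/p^2)$ should read $O(1/p^2+n)$, since $n$ need not be $O(1/p^2)$ when $f$ is large; this is harmless because all you need is $\mathrm{Var}(Y)=o(n^2)$ against a deviation of order $n$. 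With these repairs your argument is complete.
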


\begin{proof}
To prove the upper bound we will use the following stochastic domination:

For $Z_1 \sim Geom(q)$ and $Z_2 \sim Exp(-\log(1-q))$, $Z_1 \preceq 1 + Z_2$.

Defining $E_i \sim Exp(i)$ for $1 \le i \le \frac{n}{7}$,

$$
Y \preceq \frac{1}{-\log(1-p)} \sum_{i = 1}^{n/7} E_i + \frac{n}{7}
$$

Hence,

$$
\begin{aligned}
\mathbb{P}\left(Y > (1+\varepsilon)\left(\frac{n}{7} +\frac{n}{f(n)}\right)\right) &\le \mathbb{P}\left(\frac{1}{-\log(1-p)} \sum_{i = 1}^{n/7} E_i > \frac{(1+\varepsilon)n}{f(n)} + \frac{\varepsilon n}{7}\right) \\
&\le \mathbb{P}\left(\frac{1}{-\log(1-p)} \sum_{i = 1}^{n/7} E_i > (1+\varepsilon) \frac{ n}{f(n)}\right) \\
&\le \mathbb{P}\left( \sum_{i = 1}^{n/7} E_i > (1+\varepsilon) \log{n}\right)
\end{aligned}
$$
since $-\log(1-p) \ge p = \frac{f(n)\log{n}}{n}$. Given that $\sum_{i = 1}^{n/7} E_i \sim \max_{1 \le i \le n/7} \{ E_1^{i} \}$ where $E_1^{i} \sim Exp(1)$ i.i.d., we apply the union bound to deduce

$$
\mathbb{P}\left(Y > (1+\varepsilon)\left(\frac{n}{7} +\frac{n}{f(n)}\right)\right) \le n e^{-(1 + \varepsilon)\log{n}} = n^{-\varepsilon} \stackrel{n \to \infty}{\longrightarrow} 0
$$

In proving the lower bound, we will use an even simpler stochastic domination:

For $T_1^{i} \sim Geom(1-(1-p)^{i})$ and $T_2^{i} \sim Geom(ip)$ with $i \ge 1, ip < 1$, $T_1^{i} \succeq T_2^{i}$. This follows from the simple inequality $1-(1-p)^{i} \le ip$ which holds $\forall i \ge 1$. Let $T =  \sum_{i = 1}^{n/f(n)\log{n}} T_2^{i}$. We find

$$
\begin{aligned}
\mathbb{P}\left(Y < (1-\varepsilon)\left(\frac{n}{7} +\frac{n}{f(n)}\right)\right) &\le \mathbb{P}\left(T + \sum_{i = 1+n/f(n)\log{n}}^{n/7}1 < (1-\varepsilon)\left(\frac{n}{7} +\frac{n}{f(n)}\right)\right) \\
&\le \mathbb{P}\left(T < \frac{n}{f(n)} -\frac{\varepsilon n}{7} + \frac{n}{f(n)\log{n}}\right)
\end{aligned}
$$

We recognise the relation between $T$ and the coupon collector problem, (see for example \cite{mitzenmacher2005probability}). Now,

$$
\begin{aligned}
\mathbb{E}\left(T\right) = \sum_{i = 1}^{n/f(n)\log{n}} \frac{1}{ip} &= \frac{\log{n} - \log{(f(n)\log{n})} + O(1)}{p} \\
&= \frac{n}{f(n)} - \frac{\log(f(n)\log{n}) - O(1)}{f(n)\log{n}} \\
\end{aligned}
$$

$$
\text{Var}\left(T \right) = \sum_{i = 1}^{n/f(n)\log{n}} \frac{1-ip}{(ip)^2} \le \frac{1}{p^2}\sum_{i = 1}^{\infty} \frac{1}{(i)^2} = \frac{\pi^{2}}{6p^2}
$$

We use Chebyshev's inequality to conclude

$$
\begin{aligned}
&\quad \text{    }  \mathbb{P}\left(Y < (1-\varepsilon)\left(\frac{n}{7} +\frac{n}{f(n)}\right)\right) \\
&\le \mathbb{P}\left(T < \frac{n}{f(n)} -\frac{\varepsilon n}{7} + \frac{n}{f(n)\log{n}}\right) \\
&\le \mathbb{P}\left( \left|T - \mathbb{E}\left( T \right)\right| > \mathbb{E}\left( T \right) - \frac{n}{f(n)} +\frac{\varepsilon n}{7} - \frac{n}{f(n)\log{n}}\right) \\
&\le \mathbb{P}\left( \left|T - \mathbb{E}\left( T \right)\right| > \frac{\varepsilon n}{7} - \frac{n(\log(f(n)\log{n}) + O(1))}{f(n)\log{n}}    \right) \\
&\le \mathbb{P}\left( \left|T - \mathbb{E}\left( T \right)\right| > \frac{\varepsilon n}{14} \right) \text{   (for large enough }n) \\
&\le \left(\frac{\varepsilon n}{14}\right)^{-2}\text{Var}\left(T \right) = \frac{196 \pi ^2}{6(\varepsilon f(n)\log{n})^2} \stackrel{n \to \infty}{\longrightarrow} 0
\end{aligned}
$$

\end{proof}



We will prove Theorem 4 (ii) by showing high probability lower/upper bounds on $\mathcal{RC}(G(n,p),w_0)$. This is achieved by showing that with high probability this robot crawler number dominates/is dominated by a particular sum of geometrics. We will then use Lemma 3 to reach the final conclusion.

Fix the order of the vertices of $G(n,p)$ by initial weighting before we realise the edges of the random graph. So w.l.o.g. $w_0(v_i) = -i$ $\forall 1 \le i \le n$.

\begin{proof}[Proof of Theorem 4 (ii)] \hfill

\textbf{Lower Bound}

We begin by showing $\mathbb{P}\left(\mathcal{RC}(G(n,p),w_0) \le (1-\varepsilon)\left(n+\frac{f(n)}{n}\right)\right) \stackrel{n \to \infty}{\longrightarrow} 0$,  the lower bound.

The crawler begins at time 1 at vertex $v_n$, initially the dirtiest node. 

Suppose that the crawler is positioned at vertex $v$, and that there are $i$ vertices yet to be visited. 

\begin{enumerate}[-]
\item If this is the crawler's first visit to $v$, no information is known about the presence of potential edges between $v$ and yet unvisited vertices, hence the probability that $v$ is connected to an unvisited vertex is $1 - (1-p)^{i}$ independently of all previous steps of the algorithm.
\end{enumerate}

Otherwise, suppose that $w$ was the vertex visited immediately after the crawler was last at vertex $v$.

\begin{enumerate}[-]
\item If $w$ had already been cleaned, then necessarily, it is cleaner than any yet unvisited vertex which implies there are no edges between $v$ and yet uncleaned vertices.
\item If $w$ had not already been cleaned, there are no edges between $v$ and any uncleaned vertices which are \textit{dirtier} than $w$, but presence of edges between $v$ and uncleaned vertices cleaner than $w$ is independent of all previous steps of the algorithm.
\end{enumerate}

In any case, the probability $v$ is connected to an unvisited vertex is $1 - (1-p)^{j}$ for some $0 \le j \le i$. Hence, independently of all previous steps of the process, the probability $v$ is connected to an unvisited vertex is less than $1 - (1-p)^{i}$.

This implies the number of steps needed before reaching the next yet uncleaned vertex dominates a $Geom(1 - (1-p)^{i})$ random variable, and $\forall \varepsilon > 0$
$$
\begin{aligned}
&\mathbb{P}\left(\mathcal{RC}(G(n,p),w_0) \le (1-\varepsilon)\left(n+\frac{f(n)}{n}\right)\right) \\
&\le \mathbb{P}\left(\sum_{i = 1}^{n-1} Geom(1-(1-p)^{i}) \le (1-\varepsilon)\left(n+\frac{f(n)}{n}\right)\right) \\
&\le \mathbb{P}\left(Y \le (1-\varepsilon)\left(\frac{n}{7}+\frac{f(n)}{n}\right)\right) \stackrel{n \to \infty}{\longrightarrow} 0 \\
\end{aligned}
$$
 by Lemma 3.

\textbf{Upper Bound}

It remains to show that $\forall \varepsilon > 0$

$$
\mathbb{P}\left(\mathcal{RC}(G(n,p),w_0) \ge (1+\varepsilon)\left(n+\frac{f(n)}{n}\right)\right) \stackrel{n \to \infty}{\longrightarrow} 0.
$$

As in \cite{bonato2015robot} we will consider different stages of the crawling process. 

\textit{Phase 1:} Again, the process will start from $v_n$, initially the dirtiest node and proceed to clean vertices of the graph. This phase ends when either of the following occur:
\begin{enumerate}[(a)]
\item $\frac{4n}{7}$ vertices have been cleaned.
\item The crawler is not adjacent to any of the $n/7$ dirtiest (and as yet uncleaned) vertices, which are necessarily contained in $ \{ v_i, i < \frac{5n}{7} \}$. 
\end{enumerate}

We define the jump number $J(v_i), (1 \le i \le n)$ of a vertex $v_i$ as the number of times any cleaner node was visited before it was first cleaned itself. Intuitively it is the number of potential edges connected to $v_i$ which were explored before one was first found, since each occurrence of a cleaner vertex being chosen by the crawler before vertex $v_i$ implies a missing edge between the crawlers position at that time and $v_i$.

If Phase 1 ends due to (a), and also the condition ``$J(v) \le n/7$ for all vertices'' at the end of Phase 1 holds we will say that property P1 holds.

During each step of the crawling process in Phase 1, potential edges between the crawler and dirty nodes are not yet exposed. At each step, event (b) occurs if $n/7$ unexplored edges are not present in $G(n,p)$. This occurs with probability at most

$$
(1-p)^{n/7} = (1-f(n)\log{n}/n)^{n/7} \le n^{-f(n)/7} = o(n^{-3})
$$

Hence by the union bound, with probability $1-o(n^{-2})$ Phase 1 ends due to (a). 

Further, as argued above ``$J(v_i) \ge n/7$'' implies that the first $n/7$ unexplored potential edges to $v_i$ were not present. Again this has probability at most $(1-p)^{n/7} = o(n^{-3})$, and hence by another application of the union bound, property P1 holds with probability $1-o(n^{-2})$.

An important point to note is that only edges between vertices in $ \{ v_i, i < \frac{5n}{7} \}$ have been explored. Crucially for Phase 3, property P1 implies that each vertex cleaned in this phase has had at most $2n/7$ potential edges exposed by the crawler.

\textit{Phase 2:} We continue to clean vertices until any one of the following occurs:
\begin{enumerate}[(a)]
\item The crawler is not adjacent to any as yet uncleaned vertex.
\item There are $n/7$ uncleaned vertices remaining in $G(n,p)$.
\end{enumerate}

If Phase 2 ends due to (b), and all vertices in $ \{ v_i, i < \frac{5n}{7} \}$ have been cleaned by the end of the phase, then we say property P2 holds.

As in Phase 1, Phase 2 ends due to (a) at each step if (at least) $n/7$ unexplored edges are not present in $G(n,p)$. Again we can conclude using the union bound that Phase 2 ends due to (b) with probability $1-o(n^{-2})$. 

Suppose now that $\exists v \in \{ v_i, i < \frac{5n}{7} \}$ such that $v$ has not been cleaned by the crawler by the end of Phase 2. This would imply that $J(v) \ge \frac{n}{7}$ which as previously calculated has probability $o(n^{-3})$.

Using this observation we again use the union bound to deduce:

$$
\begin{aligned}
&\mathbb{P}(\{ \text{P2 holds} \}|\{ \text{Phase 2 ends due to (b)} \} \cap \{ \text{P1 holds} \}) \\
= \text{ }&1-o(n^{-2})
\end{aligned}
$$

Hence, summarising what has been done so far, 
$$\mathbb{P}(\{ \text{P1 holds} \} \cap \{ \text{P2 holds} \}) = 1-o(n^{-2})$$

\textit{Phase 3:} During this phase the crawler will continue to visit yet uncleaned vertices of $G(n,p)$ as well as revisiting some of the vertices which were cleaned during Phase 1. These vertices will have the smallest weight at this stage. This phase ends when any of the following occur:

\begin{enumerate}[(a)]
\item The crawler is not adjacent to any yet uncleaned vertex nor to any vertex which was cleaned during Phase 1 and has not yet been revisited in Phase 3.
\item The phase takes longer than $2n/7$ steps.
\item All vertices are cleaned.
\end{enumerate}

If Phase 3 ends due to (c) then we say property P3 holds. In the explanation that follows, we condition on the event that P1 and P2 hold.

At each step of this phase, in total there are at least $3n/7$ ``target'' vertices which are yet to be visited at all or were cleaned in Phase 1 and have yet to be revisited in this phase. The reason for this is there are $4n/7$ vertices cleaned in Phase 1, $n/7$ vertices yet to be visited at all and this phase takes at most $2n/7$ steps. If the crawler has just revisited a vertex cleaned in Phase 1, P1 implies at most $2n/7$ potential edges adjacent to the vertex will have been explored earlier in the process, so at least $\frac{3n}{7} - \frac{2n}{7} = \frac{n}{7}$ potential edges to ``target'' vertices are still unexplored. Otherwise, if the crawler has just visited a vertex for the first time in the process then all ($\ge3n/7$) potential edges to ``target'' vertices are unexplored. This is because crucially: no edges between $ \{ v_i, i < \frac{5n}{7} \}$ and $ \{ v_i, i \ge \frac{5n}{7} \}$ are explored in Phase 1; P2 implies the uncleaned vertices at the beginning of Phase 3 are contained within $ \{ v_i, i \ge \frac{5n}{7} \}$ and as in earlier phases, the presence of potential edges between any possible current location of the crawler and yet unvisited vertices is still undetermined, and independent of previous steps of the process. Once again, the union bound tells us the probability we have (at least) $n/7$ unexplored edges not present in $G(n,p)$ during one of these steps, and hence that Phase 3 ends due to (a), is $o(n^{-2})$.

We now argue that with probability $o(n^{-2})$ Phase 3 ends due to (b). This is essentially a repeat of the argument in Phase 2. If Phase 3 ends due to (b) then $\ge \frac{n}{7}$ vertices cleaned in Phase 1 will have been revisited during Phase 3. If $v \in \{ v_i, i \ge \frac{5n}{7} \}$ is still uncleaned at the end of the phase, then $J(v) \ge \frac{n}{7}$, since all vertices cleaned in Phase 1 will be cleaner than $v$ before it is itself cleaned. Once again, this has probability $o(n^{-3})$ and applying the union bound:
$$
\begin{aligned}
&\mathbb{P}( \text{P3 holds} \} |\{ \text{Phase 3 ends due to (b) or (c)} \} \cap \{ \text{P2 holds} \} \cap \{ \text{P1 holds} \}) \\
= \text{ }&1-o(n^{-2})
\end{aligned}
$$
 
We can now conclude that:
$$
\mathbb{P}(\{ \text{P3 holds} \} | \{ \text{P1 holds} \} \cap \{ \text{P2 holds} \}) = 1-o(n^{-2})
$$

and hence bringing together earlier calculations 
$$
\mathbb{P}(\{ \text{P1, P2, P3 hold} \} ) = 1-o(n^{-2})
$$


If $\widehat{Y} := (Y| Y \le 2n/7)$ then conditional on P1, P2 and P3, Phases 1 and 2 will take $n - n/7$ steps and Phase 3 will take a number of steps distributed as $\widehat{Y}$. Indeed, during Phase 3 when there are $x$ yet uncleaned vertices in $ \{ v_i, i \ge \frac{5n}{7} \}$, (and hence $x$ unexplored edges from the crawlers current position and these vertices), the probability the crawler will be adjacent to at least one of them is given by $1-(1-p)^x$. If the crawler continues to visit vertices with unexplored edges to all $x$ yet uncleaned vertices then the probability the crawler will reach one of these $x$ vertices in the next $y$ steps is given by $\mathbb{P}(Geom(1-(1-p)^{x}) \le y)$. And so

$$
\begin{aligned}
&\mathbb{P}\left(\mathcal{RC}(G(n,p),w_0) \ge (1+\varepsilon)\left(n+\frac{f(n)}{n}\right)\right) \\
&\le  \mathbb{P} \left(\mathcal{RC}(G(n,p),w_0) \ge (1+\varepsilon)\left(n+\frac{f(n)}{n}\right) \Big| \{ \text{P1, P2, P3 hold} \}  \right) \\ 
&\quad + \mathbb{P}\left(\{ \text{P1, P2, P3 hold} \}^{C} \right) \\
&\le \mathbb{P} \left(\widehat{Y} + \frac{6n}{7} \ge (1+\varepsilon)\left(n+\frac{f(n)}{n}\right)  \right) + o(n^{-2}) \\
&\le \mathbb{P} \left(\widehat{Y} \ge (1+\varepsilon)\left(\frac{n}{7}+\frac{f(n)}{n}\right)  \right) + o(n^{-2})  \stackrel{n \to \infty}{\longrightarrow} 0
\end{aligned}
$$
again, by Lemma 3.


\end{proof}

\bibliography{Robot_Crawler}

\begin{thebibliography}{10}

\bibitem{berenbrink2015random}
Petra Berenbrink, Colin Cooper, and Tom Friedetzky.
\newblock Random walks which prefer unvisited edges: Exploring high girth even
  degree expanders in linear time.
\newblock {\em Random Structures \& Algorithms}, 46(1):36--54, 2015.

\bibitem{bollobas1981diameter}
Bel{\'a} Bollob{\'a}s.
\newblock The diameter of random graphs.
\newblock {\em Transactions of the American Mathematical Society},
  267(1):41--52, 1981.

\bibitem{bonato2015robot}
Anthony Bonato, M~Rita, Calum MacRury, Jake Nicolaidis, Xavier
  P{\'e}rez-Gim{\'e}nez, Pawe{\l} Pra{\l}at, and Kirill Ternovsky.
\newblock The robot crawler number of a graph.
\newblock In {\em International Workshop on Algorithms and Models for the
  Web-Graph}, pages 132--147. Springer, 2015.

\bibitem{brin1998anatomy}
S~Brin and L~Page.
\newblock Anatomy of a large-scale hypertextual web search engine. 7th intl
  world wide web conf.
\newblock 1998.

\bibitem{chung2001diameter}
Fan Chung and Linyuan Lu.
\newblock The diameter of sparse random graphs.
\newblock {\em Advances in Applied Mathematics}, 26(4):257--279, 2001.

\bibitem{godreche2015record}
Claude Godr{\`e}che, Satya~N Majumdar, and Gr{\'e}gory Schehr.
\newblock Record statistics for random walk bridges.
\newblock {\em Journal of Statistical Mechanics: Theory and Experiment},
  2015(7):P07026, 2015.

\bibitem{henzinger2004algorithmic}
Monika~R Henzinger.
\newblock Algorithmic challenges in web search engines.
\newblock {\em Internet Mathematics}, 1(1):115--123, 2004.

\bibitem{messinger2008robot}
Margaret-Ellen Messinger and Richard~J Nowakowski.
\newblock The robot cleans up.
\newblock In {\em International Conference on Combinatorial Optimization and
  Applications}, pages 309--318. Springer, 2008.

\bibitem{mitzenmacher2005probability}
Michael Mitzenmacher and Eli Upfal.
\newblock {\em Probability and computing: Randomized algorithms and
  probabilistic analysis}.
\newblock Cambridge university press, 2005.

\bibitem{olston2010web}
Christopher Olston and Marc Najork.
\newblock Web crawling.
\newblock {\em Foundations and Trends in Information Retrieval}, 4(3):175--246,
  2010.

\bibitem{orenshtein2014greedy}
Tal Orenshtein and Igor Shinkar.
\newblock Greedy random walk.
\newblock {\em Combinatorics, Probability and Computing}, 23(02):269--289,
  2014.

\end{thebibliography}
\bibliographystyle{plain}

\end{document}